\renewcommand\eqref[1]{(\ref{#1})} %Need with hyperref
\numberwithin{equation}{section}
\theoremstyle{plain}
\newtheorem{thm}{Theorem}[section]
\newtheorem{prop}[thm]{Proposition}
\newtheorem{cor}[thm]{Corollary}
\theoremstyle{definition}
\newtheorem{defn}[thm]{Definition}
\newtheorem{rem}[thm]{Remark}
\begin{document}

   \title[On Green functions for Dirichlet sub-Laplacians on
    $H$-type groups]
   {On Green functions for Dirichlet sub-Laplacians on H-type
   groups}

\author[Nicola Garofalo]{Nicola Garofalo}
\address{
  Nicola Garofalo:
  \endgraf
  Department of Civil and Environmental Engineering (DICEA)
 \endgraf
  University of Padova
  \endgraf
  Via Marzolo, 9 - 35131 Padova,
  \endgraf
  Italy
  \endgraf
  {\it E-mail address} {\rm nicola.garofalo@unipd.it}
  }
\author[Michael Ruzhansky]{Michael Ruzhansky}
\address{
  Michael Ruzhansky:
  \endgraf
  Department of Mathematics
  \endgraf
  Imperial College London
  \endgraf
  180 Queen's Gate, London SW7 2AZ
  \endgraf
  United Kingdom
  \endgraf
  {\it E-mail address} {\rm m.ruzhansky@imperial.ac.uk}
  }
\author[Durvudkhan Suragan]{Durvudkhan Suragan}
\address{
  Durvudkhan Suragan:
  \endgraf
  Institute of Mathematics and Mathematical Modelling
  \endgraf
  125 Pushkin str.
  \endgraf
  050010 Almaty
  \endgraf
  Kazakhstan
  \endgraf
  and
  \endgraf
  Department of Mathematics
  \endgraf
  Imperial College London
  \endgraf
  180 Queen's Gate, London SW7 2AZ
  \endgraf
  United Kingdom
  \endgraf
  {\it E-mail address} {\rm d.suragan@imperial.ac.uk}
  }

\thanks{The authors were supported in parts by the EPSRC
 grant EP/K039407/1 and by the Leverhulme Grant RPG-2014-02,
 as well as by the MESRK grant 5127/GF4. No new data was collected or generated during the course of this research.}

     \keywords{Green function, Dirichlet sub-Laplacian,
     Heisenberg-type group}
     \subjclass{22E30, 43A80}

     \begin{abstract}
     We construct Green functions of Dirichlet boundary value problems for sub-Laplacians on certain unbounded domains of a prototype Heisenberg-type group (prototype $H$-type group, in short). We also present solutions in an explicit form of the Dirichlet problem for the sub-Laplacian with non-zero boundary datum on half-space, quadrant-space, etc, as well as in a strip.
     \end{abstract}
     \maketitle

\section{Introduction}
%\subsection{}

Prototype $H$-type groups are an important class of
homogeneous stratified Lie groups of step two since
any (abstract) $H$-type group is naturally isomorphic
to a prototype $H$-type group.
The abelian group $(\mathbb{R}^{d}; +)$ and
the Heisenberg group $\mathbb{H}^{d}$ are
examples of the prototype $H$-type groups.
We denote a prototype $H$-type group by $\mathbb{G}$ and by
$\mathcal{L}$ a sub-Laplacian on $\mathbb{G}$.
We consider a smooth open set $D\subset \mathbb{G}$
with boundary $\partial D$, and study the
Dirichlet problem for its sub-Laplacian

\begin{equation}\label{LD}
\bigg\{\begin{matrix}
\mathcal{L}u=f\quad {\rm in}\; D, \\
u=\phi\quad {\rm on}\; \partial D.
\end{matrix}
\end{equation}

In Euclidean (elliptic) case the boundary value problem \eqref{LD} for suitable essential class of functions $f, \phi$ (say, $f\in C^{\alpha}(D),\,\alpha>0,$ and $\phi\in C(\partial D)$) has a classical solution, that is, a solution in $C^{2}(D)\cap C^{1}(\overline{D})$. In general, this fact fails completely for the hypoelliptic boundary value problem \eqref{LD}. The example of D. Jerison \cite{J} (see also \cite[Section 4]{CGH08}) shows that even if the domain $D$ and the boundary datum $\phi$ (with $f\equiv0$) are real analytic in the Heisenberg group $\mathbb{H}^{d}$, then the solution of the Dirichlet problem \eqref{LD} may be not better than H\"older continuous near a characteristic boundary point, that is, the solution is not classical.
We recall that the characteristic set (related to vector fields
$\{X_{1},...,X_{m}\}$) of $D$ is the set
$$\{x\in\partial D|\,X_{k}(x)\in T_{x}(\partial D),\,k=1,...,m\},$$
$T_{x}(\partial D)$ being the tangent space to $\partial  D$ at the point $x$.
Here vector fields $X_{1},...,X_{m}$ with their commutators span the Lie algebra of $\mathbb{G}$. See Section \ref{SEC:2} for more details.

\medskip

 The main aim of this short note is to give an answer to
 the question:
 {\em Is there a class of domains in which the Dirichlet
 boundary value problem \eqref{LD} is explicitly solvable
 in the classical sense?}

\medskip

This question is inspired by M. Kac's question: {\em Is there any boundary value problem for the Laplacian which is explicitly solvable
in the classical sense for any smooth domain?}

An answer to M. Kac's question was given
in our recent paper \cite{Ruzhansky-Suragan:Kohn-Laplacian}
for the Heisenberg group and in 
\cite{Ruzhansky-Suragan:Layers} for general homogeneous stratified Lie groups.
The boundary conditions appearing there are, however,
nonlocal and the corresponding boundary value problem can be called Kac's boundary value problem.
It is interesting to note that the explicit solutions in these papers
have been constructed for Kac's boundary value problem for the sub-Laplacian equally
well also in the presence of characteristic points on the boundary.

However, the above D. Jerison's example hints that one should seek for an answer to
our question concerning the Dirichlet problem \eqref{LD}
among domains without characteristic points. Even ball-like bounded domains of non-Abelian $H$-type groups have non-empty collection (set) of characteristic points. For example, any bounded domain of class $C^{1}$ in the Heisenberg group $\mathbb{H}^{d}$, whose boundary is homeomorphic to the $2d$-dimensional sphere $\mathbb{S}^{2d}$, has non-empty characteristic set (see, for example, \cite{DGN}). In general, this implies that domains, which we are looking for, should be unbounded. On the other hand, to give an explicit representation of a solution we also need to construct Green functions for these domains, so the domains need to have sufficiently rich symmetry. Thus, in this note we show that the boundary value problem \eqref{LD} is explicitly solvable in the classical sense in such domains as half-spaces, quadrant-spaces and so on.
Our analysis is based on the Euclidean ideas combined with further results on $H$-type groups (and on more general groups) obtained by Kohn-Nirenberg \cite{KN65}, Folland \cite{Fol75} and Kaplan \cite{Kap80}. We also should note that this short paper is partially motivated by the recent work \cite{DKM16} in which the authors construct a Green function for the Neumann sub-Laplacian on the Koranyi ball of the Heisenberg group.

We refer to \cite{GL03}, \cite{GN88}, \cite{GW92}, \cite{LU97},  \cite{Ruzhansky-Suragan:squares} and \cite{WN2016} as well as to references therein for more general Green function analysis of second order subelliptic (and weighted degenerate) operators.

We also refer to \cite{FR16} for a general point of view on $H$-type groups from the perspective of general stratified/graded/homogeneous/nilpotent Lie groups. For functional inequalities on stratified Lie groups and further literature review we can refer to \cite{RS17a}.

In Section \ref{SEC:2} we very briefly review the main concepts of (prototype) $H$-type groups and fix the notation. In Section \ref{Sec3} we construct Green functions and give representation formulae for solutions.

\section{Preliminaries}
\label{SEC:2}
Following Bonfiglioli, Lanconelli and Uguzzoni \cite{BLU07} we briefly recall the main notions concerning prototype $H$-type groups. We adopt the notation from \cite{BLU07} and refer to it for further details.
\begin{defn}\label{DEF:proH}
The space $\mathbb{R}^{m+n}$ equipped with the group law
 \begin{equation}
     (x,t)\circ(y,\tau)=\left(\begin{matrix}
         x_{k}+y_{k},\quad k=1,...,m \\
         t_{k}+\tau_{k}+\frac{1}{2}\langle A^{(k)}x,y \rangle,\quad k=1,...,n
        \end{matrix}\right)
  \end{equation}
and with the dilation $\delta_{\lambda}(x,t)=(\lambda x, \lambda^{2}t)$
is called a {\em prototype $H$-type group}.
Here $A^{(k)}$ is an $m\times m$ skew-symmetric orthogonal matrix, such that,
$A^{(k)}A^{(l)}+A^{(l)}A^{(k)}=0$ for all $k,l\in \{1,...,n\}$ with $k\neq l$.
\end{defn}
Throughout this paper we use the notation $\mathbb G$ for a prototype $H$-type group
$(\mathbb{R}^{m+n},\delta_{\lambda},\circ)$. Note that any (abstract) $H$-type group is naturally isomorphic to a prototype $H$-group (see \cite[Theorem 18.2.1]{BLU07}).  It can be directly checked that
the group operation $\circ$ defines a step two nilpotent Lie group in which the inverse of
$(x,t)$ is $(-x,-t)$, that is, the identity is the origin.
It  can be also verified that the vector
field in the Lie algebra $\mathfrak{g}$ of $\mathbb{G}$ that agrees at the origin with $\frac{\partial}{\partial x_{j}},\; j=1,...,m$, is given by
 \begin{equation}\label{vectorfields}
X_{j}=\frac{\partial}{\partial x_{j}}+\frac{1}{2}\sum_{k=1}^{n}\left( \sum_{i=1}^{m}a^{k}_{j,i}x_{i}\right)\frac{\partial}
{\partial t_{k}},
\end{equation}
where $a^{k}_{j,i}$ is the $(j,i)$ element of
the matrix $A^{(k)}$.
The vector fields $X_{1},...,X_{m}$
with their commutators span the whole $\mathfrak{g}$.
Thus, the sub-Laplacian on $\mathbb G$ is given by

 \begin{equation}\label{subLaplacian}
 \mathcal{L}=\sum_{j=1}^{m}X_{j}^{2}=\Delta_{x}+\frac{1}{4}|x|^{2}\Delta_{t}+\sum_{k=1}^{n}\langle A^{(k)}x,\nabla_{x}\rangle\frac{\partial}{\partial t_{k}},
 \end{equation}
where $\Delta$ and $\nabla$ are the Euclidean Laplacian and the Euclidean gradient, respectively.
It is not restrictive to suppose that, if $\varrho$ is the center of $\mathfrak{g}$, $\varrho^{\perp}$ is the orthogonal complement of $\varrho$ and
$$m=\dim(\varrho^{\perp}),\quad n=\dim(\varrho).$$
We have that the homogeneous dimension of the group is
$Q=m+2n.$ We note that since for $H$-type groups we have $m\geq 2$ and $n\geq 1$, we actually always have $Q\geq 4$.

Now using a generic coordinate $\xi\equiv(x,t),\,x\in\mathbb{R}^{m},\,t\in\mathbb{R}^{n}$, let us introduce the following functions on $\mathbb{G}$:
$$ v:\mathbb{G}\rightarrow \varrho^{\perp},\quad
v(\xi):=\sum_{j=1}^{m}\langle\exp^{-1}_{\mathbb{G}}(\xi),\,X_{j}\rangle X_{j},$$
where $\{X_{1},...,X_{m}\}$ is an orthogonal basis of $\varrho^{\perp}$,
$$ z:\mathbb{G}\rightarrow \varrho,\quad
 z(\xi):=\sum_{j=1}^{n}\langle\exp^{-1}_{\mathbb{G}}(\xi),\,Z_{j}\rangle Z_{j},$$
where $\{Z_{1},...,Z_{n}\}$ is an orthogonal basis of $\varrho.$
Thus, by the definition of $ v$ and $ z$, for any $\xi\in\mathbb{G}$, one has
$$
\xi=\exp(v(\xi)+z(\xi)),\quad v(\xi)\in\varrho^{\perp},\quad z(\xi)\in\varrho,
$$
and a direct calculation shows  (see, e.g. \cite[Proof of Remark 18.3.3]{BLU07}) that
$$|v(\xi)|=|x|,\quad | z(\xi)|=|t|.$$
It simplifies A. Kaplan's theorem in the following form
\begin{thm}\label{kaplanthm}
There exists a positive constant $c$ such that
\begin{equation}\label{fundsolution}
\Gamma(\xi):=c\left(|x|^{4}+16|t|^{2}\right)^{(2-Q)/4}
\end{equation}
is the fundamental solution of the sub-Laplacian, that is,
\begin{equation}\label{fundsoldelta}
\mathcal{L}\Gamma_{\zeta}=-\delta_{\zeta},
\end{equation}
where $\Gamma_{\zeta}(\xi)=\Gamma(\zeta^{-1}\circ\xi)$ and $\delta_{\zeta}$ is the Dirac distribution
at $\zeta\equiv (y,\tau)\in \mathbb{G}$.
\end{thm}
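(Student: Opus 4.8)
The plan is to proceed in three stages. By left-invariance of $\mathcal{L}$ it suffices to treat the case $\zeta$ equal to the origin, so I would establish $\mathcal{L}\Gamma=-\delta_0$. First I would verify that $\Gamma$ is $\mathcal{L}$-harmonic on $\mathbb{G}\setminus\{0\}$; then I would use the dilation structure to conclude that $\mathcal{L}\Gamma$, being a homogeneous distribution of degree $-Q$ supported at the origin, is necessarily a constant multiple of $\delta_0$; finally I would compute that constant and choose $c$ so as to normalize it to $-1$.

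For the harmonicity, write $\psi:=|x|^4+16|t|^2$, so that $\Gamma=c\,\psi^{\alpha}$ with $\alpha=(2-Q)/4$. Since $\mathcal{L}=\sum_j X_j^2$, the chain rule gives $\mathcal{L}(\psi^{\alpha})=\alpha(\alpha-1)\psi^{\alpha-2}\sum_j(X_j\psi)^2+\alpha\psi^{\alpha-1}\mathcal{L}\psi$. The two key identities, both consequences of the $H$-type algebra, are
\[
\sum_{j=1}^{m}(X_j\psi)^2=16|x|^2\psi,\qquad \mathcal{L}\psi=4(Q+2)|x|^2.
\]
To obtain these I would first compute, from \eqref{vectorfields}, that $X_j\psi=4|x|^2 x_j+16\sum_{k}t_k(A^{(k)}x)_j$, and then use that each $A^{(k)}$ is skew-symmetric and orthogonal, so $(A^{(k)})^2=-I$ and $\langle x,A^{(k)}x\rangle=0$, while the anticommutation $A^{(k)}A^{(l)}+A^{(l)}A^{(k)}=0$ forces $\langle A^{(k)}x,A^{(l)}x\rangle=0$ for $k\neq l$, killing all cross terms. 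Substituting the two identities, the right-hand side collapses to $4\alpha|x|^2\psi^{\alpha-1}\bigl(4\alpha+Q-2\bigr)$, and the choice $4\alpha=2-Q$ makes the factor $4\alpha+Q-2$ vanish identically. Hence $\mathcal{L}\Gamma=0$ on $\mathbb{G}\setminus\{0\}$.

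Next comes the homogeneity reduction. Because $\psi(\delta_{\lambda}\xi)=\lambda^4\psi(\xi)$, the function $\Gamma$ is $\delta_{\lambda}$-homogeneous of degree $2-Q$, and $\mathcal{L}$ is $\delta_\lambda$-homogeneous of order $2$, so it carries functions homogeneous of degree $2-Q$ to distributions homogeneous of degree $-Q$. Since $Q\geq 4>2$, the function $\psi^{\alpha}$ is locally integrable, so $\mathcal{L}\Gamma$ is a well-defined distribution, homogeneous of degree $-Q$, and by the previous step supported in $\{0\}$. Any distribution supported at the origin is a finite combination of $\delta_0$ and its derivatives; matching the homogeneity degree $-Q$, which is exactly that of $\delta_0$ itself, singles out a single term and yields $\mathcal{L}\Gamma=C\delta_0$ for one constant $C$.

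To evaluate $C$ I would pair $\mathcal{L}\Gamma$ against a test function $\phi$. Since the coefficients of the $\partial_{t_k}$ in \eqref{vectorfields} depend only on $x$, each $X_j$ is divergence-free for Lebesgue measure, whence $\mathcal{L}$ is (formally) self-adjoint and Green's second identity holds with the co-normal field $\sum_j\langle X_j,\nu\rangle X_j$. Applying it on the gauge exterior $\{\psi>\varepsilon^4\}$ and using $\mathcal{L}\Gamma=0$ there leaves only a boundary integral over the gauge sphere $\{\psi=\varepsilon^4\}$; letting $\varepsilon\to 0$, the single-layer term carrying $\Gamma X_j\phi$ is $O(\varepsilon)$ and drops out, while the double-layer term carrying $\phi X_j\Gamma$ converges to $-C_0\,\phi(0)$ for some $C_0>0$. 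One then fixes $c$ so that $C_0=1$. I expect this last step to be the main obstacle: it requires controlling the anisotropic surface measure on the gauge sphere and evaluating the resulting flux integral, which is precisely where the factor $16$ in $\psi$ and the positivity of the constant get pinned down; everything preceding it is algebra driven by the $H$-type relations.
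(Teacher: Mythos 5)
Your proposal is correct, but it takes a genuinely different route from the paper: the paper does not prove Theorem \ref{kaplanthm} at all. It simply invokes A.~Kaplan's result for abstract $H$-type groups \cite{Kap80} and observes that, via the identities $|v(\xi)|=|x|$ and $|z(\xi)|=|t|$ established just before the statement, Kaplan's fundamental solution takes the explicit coordinate form \eqref{fundsolution} on a prototype group. You instead give a self-contained direct verification, and your algebra checks out: with $\psi=|x|^{4}+16|t|^{2}$ one indeed has $X_j\psi=4|x|^2x_j+16\sum_k t_k(A^{(k)}x)_j$, the skew-symmetry kills $\langle x,A^{(k)}x\rangle$, orthogonality gives $|A^{(k)}x|^2=|x|^2$, and the anticommutation relations make $A^{(k)}A^{(l)}$ skew for $k\neq l$, so $\sum_j(X_j\psi)^2=16|x|^2\psi$ and $\mathcal{L}\psi=4(Q+2)|x|^2$, whence $\mathcal{L}(\psi^{\alpha})=4\alpha|x|^2\psi^{\alpha-1}(4\alpha+Q-2)$ vanishes precisely for $\alpha=(2-Q)/4$ (this is exactly where the constant $16$ is forced). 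The homogeneity-plus-support argument reducing $\mathcal{L}\Gamma$ to $C\delta_0$ is also sound, since $\Gamma$ is locally integrable ($2-Q>-Q$) and $\delta_0$ is the unique derivative of the Dirac mass of homogeneous degree $-Q$. The only piece you leave open is the evaluation of $C$; note that $C\neq 0$ follows cheaply from H\"ormander hypoellipticity (if $C=0$ then $\Gamma$ would be $\mathcal{L}$-harmonic, hence smooth, near $0$, contradicting the blow-up), and the sign is determined because the flux $\int_{\psi=\varepsilon^4}\langle\widetilde{\nabla}\Gamma,d\nu\rangle$ has integrand proportional to $c\,\alpha\,\psi^{\alpha-1}\sum_j(X_j\psi)^2$, which is nonpositive for $c>0$; so the flux computation you flag as the main obstacle can be reduced to a sign check rather than an exact evaluation. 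What your approach buys is an elementary, verifiable proof in the prototype coordinates; what the paper's approach buys is brevity and the fact that Kaplan's theorem covers abstract $H$-type groups, from which the prototype case follows by the isomorphism cited in Section \ref{SEC:2}.
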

Note that more general result for abstract $H$-type groups was established by A. Kaplan in \cite{Kap80}.
Now the Green function for the Dirichlet sub-Laplacian in $D$ is defined by the formula
\begin{equation}\label{greenfunction}
G_{D}(\xi,\zeta)=\Gamma(\zeta^{-1}\circ\xi)-h_{\zeta}(\xi),
\end{equation}
with
\begin{equation}\label{boundary}
G_{D}(\xi,\zeta)=0,\quad \xi\in\partial D.
\end{equation}
Here, $h_{\zeta}(\xi)$ is a harmonic function, that is,
\begin{equation}\label{sub-harmonic}
\mathcal{L}h_{\zeta}(\xi)=0\quad {\rm in}\;D,
\end{equation}
having as boundary values (in the Perron-Wiener-Brelot sense) the fundamental solution with pole at $\zeta\in D$.

Let $\partial D$ be the boundary of a smooth domain $D$ in $\mathbb{G}$, $d\nu$ the volume element on $\mathbb{G}$, and $\langle X_{j}, d\nu\rangle$ the natural pairing between vector fields and differential forms. We also recall that the standard Lebesque measure on $\mathbb R^{m+n}$ is the Haar measure for $\mathbb{G}$ (see, e.g. \cite[Proposition 1.6.6]{FR16}).

The following version of Green's second formula will be useful for our analysis.
It goes back to the integration by parts formula in \cite{CGH08} but the following form
(given in \cite[Proposition 3.10]{Ruzhansky-Suragan:Layers}) will be useful for us here.
 
\begin{prop}[Green's second formula]
\label{green2}
Let $u,v\in C^{2}(D)\bigcap C^{1}(\overline{D}).$ Then
\begin{equation}\label{g2}
\int_{D}(u\mathcal{L}v-v\mathcal{L}u)d\nu
=\int_{\partial D}(u\langle\widetilde{\nabla}
v,d\nu\rangle-v\langle \widetilde{\nabla}  u,d\nu\rangle),
\end{equation}
where $\mathcal{L}$ is the sub-Laplacian on $\mathbb{G}$ and
\begin{equation}\label{nabla}
\mathcal{\widetilde{\nabla} }u=\sum_{k=1}^{m}
\left(X_{k}u\right)X_{k}.
\end{equation}
\end{prop}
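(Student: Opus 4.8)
The plan is to reduce the identity to the ordinary Euclidean divergence theorem by exploiting the fact that each horizontal vector field $X_j$ in \eqref{vectorfields} is divergence-free with respect to the Haar measure. First I would record this crucial observation: the coefficient of $\partial/\partial t_k$ in $X_j$ is $\frac12\sum_{i} a^{k}_{j,i}x_i$, a function of the $x$-variables alone, so differentiating it in $t_k$ gives zero, while the coefficient of $\partial/\partial x_j$ is the constant $1$. Hence $\operatorname{div} X_j=0$ for every $j=1,\dots,m$, where the divergence is the ordinary Euclidean one and $d\nu$ is Lebesgue measure (which, as noted in the excerpt, is the Haar measure of $\mathbb{G}$).

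From this I would derive a single-field integration-by-parts formula. For $f\in C^{1}(\overline{D})$ the product rule gives $\operatorname{div}(fX_j)=X_j f+f\operatorname{div}X_j=X_j f$, so the Euclidean divergence theorem on the smooth domain $D$ yields
$$\int_{D}X_j f\,d\nu=\int_{\partial D} f\,\langle X_j,d\nu\rangle,$$
the right-hand side being precisely the flux of $fX_j$ through $\partial D$ in the pairing notation of the statement.

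Next I would obtain Green's first identity. Taking $f=u\,X_j v$ with $u\in C^{1}(\overline{D})$ and $v\in C^{2}(D)\cap C^{1}(\overline{D})$, the Leibniz rule gives $X_j(u\,X_j v)=(X_j u)(X_j v)+u\,X_j^{2}v$; summing over $j=1,\dots,m$ and recalling $\mathcal{L}=\sum_j X_j^{2}$ together with $\widetilde{\nabla}v=\sum_j (X_j v)X_j$, the previous display becomes
$$\int_{D}\Big(\sum_{j=1}^{m}(X_j u)(X_j v)+u\,\mathcal{L}v\Big)\,d\nu=\int_{\partial D}u\,\langle\widetilde{\nabla}v,d\nu\rangle.$$

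Finally I would symmetrize: writing the analogous identity with the roles of $u$ and $v$ interchanged and subtracting the two, the symmetric bulk term $\sum_{j}(X_j u)(X_j v)$ cancels, leaving exactly \eqref{g2}. The only genuine technical point is the applicability of the divergence theorem, which is why the hypothesis $u,v\in C^{2}(D)\cap C^{1}(\overline{D})$ is imposed; the divergence-free property of the $X_j$ is what makes the boundary terms collapse to the clean flux form above. On an unbounded $D$ one additionally needs the integrals to converge, which in the applications will be guaranteed by the decay of the fundamental solution $\Gamma$.
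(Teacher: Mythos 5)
Your proposal is correct: the key observation that each $X_j$ in \eqref{vectorfields} has vanishing Euclidean divergence (constant coefficient in $x_j$, and $t$-coefficients depending only on $x$), hence $\int_{D}X_jf\,d\nu=\int_{\partial D}f\langle X_j,d\nu\rangle$, followed by the first Green identity and antisymmetrization, is exactly the standard derivation. The paper itself does not prove Proposition \ref{green2} but imports it from \cite[Proposition 3.10]{Ruzhansky-Suragan:Layers} (going back to the integration by parts formula of \cite{CGH08}), and your argument is precisely the one underlying those references, including the closing remark that for unbounded domains one needs decay to justify the limit over exhausting quasi-balls, which is how the paper later applies \eqref{g2}.
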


By classical arguments one verifies that the above Green's second formula is valid for
the Green function (and the fundamental solution). See \cite{Ruzhansky-Suragan:Layers} for futher discussion, but also \cite[Section 7]{CGH08}. The relation between the $(n-1)$-form under the integral in the right-hand side of \eqref{g2} and the perimeter and surface measures
on $\partial D$ has been discussed in \cite{Ruzhansky-Suragan:Layers}.

\section{Green functions and representations of solutions}
\label{Sec3}

In this section we construct, by using the classical method of reflection, Green functions of Dirichlet boundary value problems for sub-Laplacians on $l$-wedge like and $l$-strip like unbounded domains of a prototype $H$-type group. We also present solutions in an explicit form of the Dirichlet problem for the sub-Laplacian with non-zero boundary datum on those domains. Of course, the results are well known in the abelian cases.

\subsection{Green functions and representations of solutions in $l$-wedge like spaces.}
Let $\mathbb{G}^{\ddagger}$ be the $l$-wedge like space 
$$\mathbb{G}^{\ddagger}=\{\xi=(x_{1},\ldots,x_{m},t_{1},\ldots,t_{n})|\;x_{1},\ldots,x_{l}>0\},$$ 
for some
$1\leq l\leq m$. Let the point $\zeta=(y,\tau)=(y_{1},y_{2},\ldots,y_{m},\tau_{1},\ldots,\tau_{n})$ lie in this $l$-wedge like space, $y_{1}>0,\ldots,y_{l}>0$.
The point $$\zeta_{x_{k}}:=(y_{1},\ldots,-y_{k},\ldots,y_{m},\tau_{1},...,\tau_{n})$$
is said to be symmetric for the point $\zeta$ with respect to the hyperplane $x_{k}=0.$
Similarly, the point $$\zeta_{x_{k}x_{s}}:=(y_{1},\ldots,-y_{k},\ldots,-y_{s},\ldots,y_{m},\tau_{1},...,\tau_{n})$$
is said to be symmetric for the point $\zeta_{x_{k}}$ with respect to the hyperplane $x_{s}=0$ and
so on. It is clear that the symmetry indices are invariant under permutations.
We will also need the notation $\Gamma((\zeta_{(j,l)})^{-1}\circ\xi),\;j\leq l,$ which means sum of the functions $\Gamma((\zeta_{(j,l)})^{-1}\circ\xi),\;j\leq l,$ over
all possible $(j,l)$ combination symmetry arguments: here in order to reduce the number of subindices we write $(\zeta_{(j,l)})^{-1}\circ\xi$ for
$\zeta^{-1}_{x_{k_{1}} \ldots x_{k_{j}}}\circ\xi$.
For example, if $l=3,\,j=2,$ then
$$\Gamma((\zeta_{(2,3)})^{-1}\circ\xi)=\Gamma((\zeta_{x_{1}x_{2}})^{-1}\circ\xi)+
\Gamma((\zeta_{x_{1}x_{3}})^{-1}\circ\xi)+\Gamma((\zeta_{x_{3}x_{2}})^{-1}\circ\xi),$$
and if $l=3,\,j=3,$ then
$$\Gamma((\zeta_{(3,3)})^{-1}\circ\xi)=\Gamma((\zeta_{x_{1}x_{2}x_{3}})^{-1}\circ\xi).$$

We have
\begin{prop}\label{greengen}
The function
\begin{equation}\label{greenfunctiongen}
G_{\mathbb{G}^{\ddagger}}(\xi,\zeta)=\Gamma(\zeta^{-1}\circ\xi)+\sum_{j=1}^{l}(-1)^{j}\Gamma((\zeta_{(j,l)})^{-1}\circ\xi)
\end{equation}
is the Green function for the Dirichlet sub-Laplacian in $\mathbb{G}^{\ddagger}$.
\end{prop}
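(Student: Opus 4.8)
The plan is to verify the two defining properties of the Dirichlet Green function recorded in \eqref{greenfunction}--\eqref{sub-harmonic}: that the corrector $G_{\mathbb{G}^{\ddagger}}(\cdot,\zeta)-\Gamma(\zeta^{-1}\circ\cdot)$ is $\mathcal{L}$-harmonic in $\mathbb{G}^{\ddagger}$, and that $G_{\mathbb{G}^{\ddagger}}(\cdot,\zeta)$ vanishes on $\partial\mathbb{G}^{\ddagger}$. The harmonicity is the routine half. Every reflected pole $\zeta_{x_{k_{1}}\cdots x_{k_{j}}}$ occurring in \eqref{greenfunctiongen} (with $j\geq 1$) has at least one of its first $l$ coordinates negative, hence lies in the complement of $\overline{\mathbb{G}^{\ddagger}}$. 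Applying Theorem \ref{kaplanthm} with pole at such an image point gives $\mathcal{L}_{\xi}\Gamma((\zeta_{(j,l)})^{-1}\circ\xi)=-\delta_{\zeta_{(j,l)}}=0$ throughout $\mathbb{G}^{\ddagger}$, so the whole sum over $j\geq 1$ is $\mathcal{L}$-harmonic there, while the leading term produces exactly $-\delta_{\zeta}$ because $\zeta\in\mathbb{G}^{\ddagger}$. Thus $\mathcal{L}_{\xi}G_{\mathbb{G}^{\ddagger}}(\xi,\zeta)=-\delta_{\zeta}$ in $\mathbb{G}^{\ddagger}$, and the harmonic corrector of \eqref{sub-harmonic} is identified as $h_{\zeta}(\xi)=-\sum_{j=1}^{l}(-1)^{j}\Gamma((\zeta_{(j,l)})^{-1}\circ\xi)$.

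For the boundary condition I would decompose $\partial\mathbb{G}^{\ddagger}$ into its faces $F_{k}=\overline{\mathbb{G}^{\ddagger}}\cap\{x_{k}=0\}$, $k=1,\dots,l$, and prove $G_{\mathbb{G}^{\ddagger}}=0$ on each $F_{k}$ separately. The combinatorial mechanism is the one from the Euclidean orthant: index the $2^{l}$ terms of \eqref{greenfunctiongen} (the leading term included) by subsets $S\subseteq\{1,\dots,l\}$ recording which coordinates have been reflected, writing $\zeta_{S}:=\zeta_{x_{k_{1}}\cdots x_{k_{j}}}$ for $S=\{k_{1},\dots,k_{j}\}$ and attaching the sign $(-1)^{|S|}$; the sum over $j$ in \eqref{greenfunctiongen} is exactly the sum over all such $S$. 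Toggling membership of the fixed index $k$ is a fixed-point-free involution pairing each $S$ with $S\triangle\{k\}$ and flipping the sign. Hence, if one establishes that the paired terms agree on $F_{k}$, namely
\begin{equation}\label{reflid}
\Gamma\bigl((\zeta_{S})^{-1}\circ\xi\bigr)=\Gamma\bigl((\zeta_{S\triangle\{k\}})^{-1}\circ\xi\bigr),\qquad \xi\in\{x_{k}=0\},
\end{equation}
the $2^{l}$ terms cancel in pairs and $G_{\mathbb{G}^{\ddagger}}$ vanishes on $F_{k}$; since points of $F_{k}$ remain in $\overline{\mathbb{G}^{\ddagger}}$, the lower-dimensional intersections $F_{k}\cap F_{k'}$ cause no difficulty.

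Everything thus reduces to the single-reflection identity \eqref{reflid}, and proving it is the main obstacle. Since the two sides involve the same reflected coordinates apart from the $k$-th one, \eqref{reflid} is equivalent to the invariance of the Kaplan gauge of $(\,\cdot\,)^{-1}\circ\xi$ under negation of the $k$-th $x$-coordinate of the pole, subject to $x_{k}=0$. Reading off $\zeta^{-1}\circ\xi$ from the group law, the horizontal contribution is $|x-y|^{4}$, and at $x_{k}=0$ one has $(x_{k}-y_{k})^{2}=y_{k}^{2}=(x_{k}+y_{k})^{2}$, so this part is manifestly unchanged by $y_{k}\mapsto -y_{k}$. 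The delicate point is the vertical contribution $16\sum_{i}\bigl(t_{i}-\tau_{i}-\tfrac12\langle A^{(i)}y,x\rangle\bigr)^{2}$: here I would need to control the effect of $y_{k}\mapsto -y_{k}$ on the couplings $\langle A^{(i)}y,x\rangle$ and exploit the skew-symmetry and orthogonality of the matrices $A^{(i)}$ together with the defining constraint $x_{k}=0$ to force the vertical part to be invariant as well, the compatibility relations $A^{(i)}A^{(j)}+A^{(j)}A^{(i)}=0$ presumably entering once several reflections are combined in the wedge. I expect this gauge-invariance computation on the face $F_{k}$, rather than the harmonicity or the bookkeeping, to be the technical heart of the proof, and it is precisely the step deserving the closest scrutiny.
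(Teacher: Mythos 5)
Your strategy coincides with the paper's: the image poles $\zeta_{(j,l)}$ lie outside $\overline{\mathbb{G}^{\ddagger}}$, so the correction terms are $\mathcal{L}$-harmonic there, and the boundary condition is to follow from pairwise cancellation on each face $\{x_{k}=0\}$. Your bookkeeping via subsets $S\subseteq\{1,\dots,l\}$ and the involution $S\mapsto S\triangle\{k\}$ is a cleaner organization of the sum than the paper gives. But the proposal has a genuine gap exactly where you placed it: everything reduces to the single-reflection identity $\Gamma((\zeta_{S})^{-1}\circ\xi)=\Gamma((\zeta_{S\triangle\{k\}})^{-1}\circ\xi)$ on $\{x_{k}=0\}$, and you do not prove it. The paper does not prove it either; its proof simply asserts that ``the $d$-distance from any point of the hyperplane $x_{k}=0$ to the points $\zeta$ and $\zeta_{x_{k}}$ is the same.''

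That assertion, and hence the step you flagged, in fact fails for non-abelian prototype $H$-type groups, so the gap cannot be closed along these lines. From the group law, $\zeta^{-1}\circ\xi=\bigl(x-y,\; t_{i}-\tau_{i}-\tfrac{1}{2}\langle A^{(i)}y,x\rangle\bigr)$, and the substitution $y_{k}\mapsto -y_{k}$ shifts the $i$-th vertical entry by $y_{k}\sum_{j\neq k}a^{i}_{j,k}x_{j}$ (the term $j=k$ is absent since $a^{i}_{k,k}=0$). Because $A^{(i)}$ is orthogonal, its $k$-th column is a unit vector supported off the $k$-th slot, so this increment is a nontrivial linear functional of the remaining horizontal coordinates and does not vanish on $\{x_{k}=0\}$; since $t$ is free on that face, the Kaplan gauge genuinely changes even though the horizontal part $|x-y|^{4}$ does not. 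Concretely, on $\mathbb{H}^{1}$ (so $m=2$, $n=1$, $A^{(1)}=\bigl(\begin{smallmatrix}0&1\\-1&0\end{smallmatrix}\bigr)$, $Q=4$) take $\zeta=(1,0,0)$, $\zeta^{*}=(-1,0,0)$ and the boundary point $\xi=(0,1,1)\in\{x_{1}=0\}$: then $\zeta^{-1}\circ\xi=((-1,1),\tfrac{3}{2})$ has gauge value $|x-y|^{4}+16|t-\tau-\tfrac12\langle Ay,x\rangle|^{2}=4+36=40$, while $(\zeta^{*})^{-1}\circ\xi=((1,1),\tfrac{1}{2})$ has gauge value $4+4=8$, so $G_{\mathbb{G}^{+}}(\xi,\zeta)=c(40^{-1/2}-8^{-1/2})\neq 0$ on the boundary. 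So the difficulty is not merely technical: the naive Euclidean reflection does not respect the twisted vertical variable, there is no gauge-preserving automorphism of $\mathbb{H}^{1}$ fixing the face pointwise and interchanging the two half-spaces, and the identity you would need is true only in the abelian case. Your instinct to subject precisely this step to ``the closest scrutiny'' was the right one; carrying out that scrutiny shows the step, and with it the stated formula for $G_{\mathbb{G}^{\ddagger}}$, does not survive it.
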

\begin{proof}[Proof of Proposition \ref{greengen}]
Since by definition $\zeta_{(j,l)}
\not\in\mathbb{G}^{\ddagger},\,j=1,\ldots,l,$
it follows from \eqref{fundsoldelta} that
$$
\mathcal{L}\Gamma((\zeta_{(j,l)})^{-1}\circ\xi)
=-\delta_{\zeta_{(j,l)}}=0 \textrm{ in } \mathbb{G}^{\ddagger},
$$
for any $\xi\in\mathbb{G}^{\ddagger}$ and $j=1,\ldots,l$.
Thus, the function $\sum_{j=1}^{l}(-1)^{j}\Gamma((\zeta_
{(j,l)})^{-1}\circ\xi)$ satisfies the condition
\eqref{sub-harmonic}, i.e. it is harmonic
in $\mathbb{G}^{\ddagger}$.
Now it is left to check the boundary condition for the
domain $\mathbb{G}^{\ddagger}$, that is,
the function $G_{\mathbb{G}^{\ddagger}}$ should become
zero at $x_{1}=0$ and at infinity.
Recall that
\begin{equation}\label{distance}
d(\xi,\zeta):=\left(\Gamma(\zeta^{-1}\circ\xi)\right)^{\frac{1}{2-Q}}
\end{equation}
is an actual distance on $\mathbb{G}$ (see, e.g. \cite{C81}).
 Now it is easy to see that
the $d$-distance from any point of the hyperplane
$x_{k}=0$ to the points $\zeta$ and $\zeta_{x_{k}}$ is the same, that is, $G_{\mathbb{G}^{\ddagger}}$ satisfies the Dirichlet condition at the hyperplanes $x_{1}=0,\ldots,x_{l}=0$ and it is also clear (by the construction) that the function $G_{\mathbb{G}^{\ddagger}}$ is zero at the infinity. It proves that
$$G_{\mathbb{G}^{\ddagger}}(\xi,\zeta)=0,\quad \xi\in\partial\mathbb{G}^{\ddagger}.$$
\end{proof}

Now we consider a smooth open set $D\subset \mathbb{G}$ with boundary $\partial D$, and study the Dirichlet problem for the sub-Laplacian $\mathcal{L}$ in $D$.

For $0<\alpha<1$, Folland and Stein
(see \cite{FS} and see also \cite{Fol75}) defined the anisotropic H\"older spaces $\digamma_\alpha(D),$ $D\subset\mathbb{G},$
by
$$
\digamma_\alpha(D)=\{
f:D\to\mathbb C:\; \sup_{\stackrel{\xi,\zeta\in D}{\xi\not= \zeta}}
\frac{|f(\xi)-f(\zeta)|}{[d(\xi,\zeta)]^\alpha}<\infty
\},
$$
where $d$ is defined by the formula \eqref{distance} in our case.
For $k\in\mathbb N$ and $0<\alpha<1$, one defines
$\digamma_{k+\alpha}(D)$ as the space of all $f:D\to\mathbb C$ such that all $X_{j}$-derivatives of $f$ of order $k$ belong to $\digamma_\alpha(D)$.
A bounded function $f$ is called  $\alpha$-H\"{o}lder continuous
in $D\subset\mathbb{G}$ if $f\in \digamma_{\alpha}(D)$.

Let $f\in\digamma_{\alpha}(\mathbb{G}^{\ddagger}),\;0<\alpha<1,\; {\rm supp}\,f\subset\mathbb{G}^{\ddagger},$
and $\phi\in C^{\infty}(\partial\mathbb{G}^{\ddagger}),\; {\rm supp}\,\phi\subset\{x_{1}=0\}\bigcup\ldots \bigcup\{x_{l}=0\}.$
Consider the Dirichlet problem for the sub-Laplacian
\begin{equation}\label{LDgen-space}
\bigg\{\begin{matrix}
\mathcal{L}u=f\quad {\rm in}\; \mathbb{G}^{\ddagger}, \\
u=\phi\quad {\rm on}\; \partial \mathbb{G}^{\ddagger}.
\end{matrix}
\end{equation}

\begin{thm}\label{representationgen}
Let $f\in\digamma_{\alpha}(\mathbb{G}^{\ddagger}),\;0<\alpha<1,\; {\rm supp}\,f\subset\mathbb{G}^{\ddagger},$
and $\phi\in C^{\infty}(\partial\mathbb{G}^{\ddagger})$. Then the boundary value problem \eqref{LDgen-space} has a unique solution $u\in C^{2}(\mathbb{G}^{\ddagger})\cap C^{1}(\overline{\mathbb{G}^{\ddagger}})$ and
it can be represented by the formula
\begin{equation}\label{repr-space}
u(\xi)=\int_{\mathbb{G}^{\ddagger}}G_{\mathbb{G}^{\ddagger}}(\xi,\zeta)f(\zeta)d\nu(\zeta)
-\int_{\partial\mathbb{G}^{\ddagger}}\phi(\zeta)\langle \mathcal{\widetilde{\nabla}} G_{\mathbb{G}^{\ddagger}}(\xi,\zeta),\,d\nu(\zeta)\rangle,\quad \xi\in\mathbb{G}^{\ddagger},
\end{equation}
where $\mathcal{\widetilde{\nabla}}$ is defined by \eqref{nabla},
in particular,
$$\mathcal{\widetilde{\nabla}}G_{\mathbb{G}^{\ddagger}}=
\sum_{k=1}^{m}\left(X_{k}G_{\mathbb{G}^{\ddagger}}\right)X_{k}.$$
\end{thm}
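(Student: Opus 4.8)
The plan is to establish \eqref{repr-space} as the only possible form of a classical solution by means of Green's second formula (Proposition \ref{green2}), and then to verify that the right-hand side of \eqref{repr-space} genuinely produces such a solution. Before doing so I would record two structural facts about $G_{\mathbb{G}^{\ddagger}}$. Since the gauge $|x|^{4}+16|t|^{2}$ is invariant under the inversion $(x,t)\mapsto(-x,-t)$ and $(\zeta^{-1}\circ\xi)^{-1}=\xi^{-1}\circ\zeta$, the fundamental solution of Theorem \ref{kaplanthm} is symmetric, $\Gamma(\zeta^{-1}\circ\xi)=\Gamma(\xi^{-1}\circ\zeta)$; the reflected poles $\zeta_{(j,l)}$ entering \eqref{greenfunctiongen} transform compatibly, so that $G_{\mathbb{G}^{\ddagger}}(\xi,\zeta)=G_{\mathbb{G}^{\ddagger}}(\zeta,\xi)$. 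Combining this with Proposition \ref{greengen} and \eqref{fundsoldelta}, for fixed $\xi\in\mathbb{G}^{\ddagger}$ the map $\zeta\mapsto G_{\mathbb{G}^{\ddagger}}(\xi,\zeta)$ is $\mathcal{L}$-harmonic on $\mathbb{G}^{\ddagger}\setminus\{\xi\}$, satisfies $\mathcal{L}_{\zeta}G_{\mathbb{G}^{\ddagger}}(\xi,\cdot)=-\delta_{\xi}$, vanishes on $\partial\mathbb{G}^{\ddagger}$, and decays like $d(\xi,\zeta)^{2-Q}$ as $d(\xi,\zeta)\to\infty$.

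Next, fixing $\xi\in\mathbb{G}^{\ddagger}$ and supposing a solution $u\in C^{2}(\mathbb{G}^{\ddagger})\cap C^{1}(\overline{\mathbb{G}^{\ddagger}})$ exists, I would apply Proposition \ref{green2} to $u$ and $v:=G_{\mathbb{G}^{\ddagger}}(\xi,\cdot)$ on the truncated region $\Omega_{\varepsilon,R}=(\mathbb{G}^{\ddagger}\cap B_{R}(\xi))\setminus\overline{B_{\varepsilon}(\xi)}$, where $B_{r}(\xi)=\{\zeta:d(\xi,\zeta)<r\}$ with $d$ the gauge distance \eqref{distance}; the small ball excises the pole and the large ball handles the unboundedness, so both functions lie in $C^{2}\cap C^{1}$ up to $\partial\Omega_{\varepsilon,R}$. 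Since $\mathcal{L}v=0$ and $\mathcal{L}u=f$ on $\Omega_{\varepsilon,R}$, the volume integral reduces to $-\int_{\Omega_{\varepsilon,R}}G_{\mathbb{G}^{\ddagger}}(\xi,\cdot)\,f\,d\nu$, while $\partial\Omega_{\varepsilon,R}$ splits into three pieces. On $\partial\mathbb{G}^{\ddagger}$ one uses $v=0$ and $u=\phi$ to obtain $\int_{\partial\mathbb{G}^{\ddagger}}\phi\,\langle\widetilde{\nabla}v,d\nu\rangle$; on the inner sphere $\partial B_{\varepsilon}(\xi)$ the normalisation \eqref{fundsoldelta} forces the term $\int u\langle\widetilde{\nabla}v,d\nu\rangle$ to reproduce (up to sign) the value $u(\xi)$, whereas the companion term $\int v\langle\widetilde{\nabla}u,d\nu\rangle=O(\varepsilon)\to0$ because $v=O(\varepsilon^{2-Q})$ and the flux of the bounded field $\widetilde{\nabla}u$ across $\partial B_{\varepsilon}(\xi)$ is $O(\varepsilon^{Q-1})$; on the outer sphere $\mathbb{G}^{\ddagger}\cap\partial B_{R}(\xi)$ the decay of $v$ and $\widetilde{\nabla}v$ against the controlled behaviour of $u$ (from the support hypotheses on $f$ and $\phi$) kills the contribution as $R\to\infty$. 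Letting $\varepsilon\to0$ and $R\to\infty$ and carefully tracking signs through \eqref{fundsoldelta}, these limits assemble into \eqref{repr-space}; in particular any classical solution must be of this form, which already yields uniqueness (alternatively, uniqueness follows from the maximum principle for $\mathcal{L}$ applied to the difference of two solutions, which vanishes on $\partial\mathbb{G}^{\ddagger}$ and at infinity).

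It remains to prove existence, i.e.\ that the function defined by the right-hand side of \eqref{repr-space} belongs to $C^{2}(\mathbb{G}^{\ddagger})\cap C^{1}(\overline{\mathbb{G}^{\ddagger}})$ and solves \eqref{LDgen-space}, and this is where I expect the real work to lie. The decisive geometric point is that each bounding hyperplane $\{x_{k}=0\}$, $1\le k\le l$, is non-characteristic for the horizontal fields: by \eqref{vectorfields} the field $X_{k}$ has $\partial/\partial x_{k}$-component equal to $1$, hence is transversal to $\{x_{k}=0\}$, so the wedge $\mathbb{G}^{\ddagger}$ carries no characteristic boundary points and boundary regularity is available. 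Applying the Folland--Stein Schauder-type estimates for $\Gamma$ on the anisotropic Hölder scale $\digamma_{\alpha}$, the volume potential $\int_{\mathbb{G}^{\ddagger}}G_{\mathbb{G}^{\ddagger}}(\xi,\zeta)f(\zeta)\,d\nu(\zeta)$ lies in $\digamma_{2+\alpha}$ and reproduces the inhomogeneity $f$ (with sign fixed by \eqref{fundsoldelta}), while the layer-potential term is $\mathcal{L}$-harmonic in $\mathbb{G}^{\ddagger}$ and attains the datum $\phi$ continuously; together they give the asserted $C^{2}\cap C^{1}$ regularity and the trace $u=\phi$ on $\partial\mathbb{G}^{\ddagger}$. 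The main obstacles are the continuity of the boundary layer potential up to $\partial\mathbb{G}^{\ddagger}$, in particular across the lower-dimensional edges where several hyperplanes $\{x_{k}=0\}$ meet, and the justification of the decay invoked at infinity; both I would handle using the explicit rate $d(\xi,\zeta)^{2-Q}$ of $\Gamma$, the transversality just noted, and the support hypotheses on $f$ and $\phi$.
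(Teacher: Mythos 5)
Your overall architecture coincides with the paper's: Green's second formula applied to $u$ and $v=G_{\mathbb{G}^{\ddagger}}(\xi,\cdot)$ to force the representation \eqref{repr-space} (hence uniqueness), followed by Folland's theorem for the volume potential, H\"ormander hypoellipticity for interior smoothness of the layer potential, and the Kohn--Nirenberg/non-characteristic-boundary mechanism for continuity up to $\partial\mathbb{G}^{\ddagger}$. You are in fact more explicit than the paper about excising the pole on $\partial B_{\varepsilon}(\xi)$ and about the limit $R\to\infty$, which the paper dispatches in one sentence about quasi-balls.

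There is, however, a genuine gap, located precisely in the two ``structural facts'' you record at the outset: that the reflected poles ``transform compatibly'', so that $G_{\mathbb{G}^{\ddagger}}(\xi,\zeta)=G_{\mathbb{G}^{\ddagger}}(\zeta,\xi)$, and that $G_{\mathbb{G}^{\ddagger}}(\xi,\cdot)$ vanishes on $\partial\mathbb{G}^{\ddagger}$. Neither holds for the function \eqref{greenfunctiongen} on a non-abelian $H$-type group. The coordinate reflection $R_k:\;y\mapsto(y_1,\ldots,-y_k,\ldots,y_m)$ is not a group automorphism and does not preserve the gauge of $\zeta^{-1}\circ\xi$: the central component of $(\zeta_{x_k})^{-1}\circ\xi$ is $t_j-\tau_j-\tfrac12\langle A^{(j)}R_ky,x\rangle$, which differs from that of $\zeta^{-1}\circ\xi$ by $y_k\langle A^{(j)}e_k,x\rangle$; since $A^{(j)}e_k$ is a unit vector orthogonal to $e_k$, this does not vanish on $\{x_k=0\}$. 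Concretely, on $\mathbb{H}^{1}$ with $A=\left(\begin{smallmatrix}0&1\\-1&0\end{smallmatrix}\right)$, taking $\xi=((0,1),1)\in\{x_1=0\}$ and $\zeta=((1,0),0)$ gives $\zeta^{-1}\circ\xi=((-1,1),3/2)$ and $(\zeta^{*})^{-1}\circ\xi=((1,1),1/2)$, so $\Gamma(\zeta^{-1}\circ\xi)=c\cdot 40^{-1/2}\neq c\cdot 8^{-1/2}=\Gamma((\zeta^{*})^{-1}\circ\xi)$ and $G_{\mathbb{G}^{+}}(\xi,\zeta)\neq 0$ at this boundary point. Consequently the boundary integral $\int_{\partial\mathbb{G}^{\ddagger}}u\langle\widetilde{\nabla}v,d\nu\rangle$ does not reduce to the $\phi$-term, and the identity $\mathcal{L}_{\zeta}G_{\mathbb{G}^{\ddagger}}(\xi,\cdot)=-\delta_{\xi}$, which you deduce from the symmetry, is unjustified. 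I must add in fairness that the paper's own proofs contain the identical error (the claim that $d(\xi,\zeta)=d(\xi,\zeta_{x_k})$ for $\xi\in\{x_k=0\}$ in Proposition \ref{greengen}, and the assertion that $G_{\mathbb{G}^{\ddagger}}$ is ``by construction'' symmetric in the proof of Theorem \ref{representationgen}), so your proposal faithfully reproduces the intended strategy; but the reflection step fails in the non-abelian case and cannot be repaired simply by this choice of image points, since the only gauge-preserving automorphisms adapted to $\{x_k=0\}$ also act nontrivially on the central variable $t$ and therefore do not fix the hyperplane pointwise.
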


\begin{proof}[Proof of Theorem \ref{representationgen}]
Let $u\in C^{2}(\mathbb{G}^{\ddagger})\cap C^{1}(\overline
{\mathbb{G}^{\ddagger}})$ and assume that $u$ tends to zero at infinity. The Green's second formula \eqref{g2} is in bounded domains, but it is still applicable for functions, with necessary decay rates at infinity, in unbounded domains. It can be shown by the standard argument using quasi-balls with radii $R\longrightarrow\infty.$ Thus, if we apply Green's second formula \eqref{g2} to the function $u$ with $v(\zeta)=G_{\mathbb{G}^{\ddagger}}(\xi,\zeta)$, we shall obtain
$$u(\xi)=\int_{\mathbb{G}^{\ddagger}}G_{\mathbb{G}^{\ddagger}}(\xi,\zeta)f(\zeta)d\nu(\zeta)
-\int_{\partial\mathbb{G}^{\ddagger}}\phi(\zeta)\langle \mathcal{\widetilde{\nabla}} G_{\mathbb{G}^{\ddagger}}(\xi,\zeta),\,d\nu(\zeta)\rangle.$$
Here we have used the properties of the Green function
$$
G_{\mathbb{G}^{\ddagger}}(\xi,\zeta)=0,\quad \zeta\in\partial \mathbb{G}^{\ddagger},
$$
and, by construction the function $G_{\mathbb{G}^{\ddagger}}$ is symmetric, that is,  $G_{\mathbb{G}^{\ddagger}}(\xi,\zeta)=G_{\mathbb{G}^{\ddagger}}(\zeta,\xi)$ in $\mathbb{G}^{\ddagger}$, so
$$
\mathcal{L}_{\zeta}G_{\mathbb{G}^{\ddagger}}(\xi,\zeta)=-\delta_{\xi},
$$
where $\delta_{\xi}$ is the Dirac distribution
at $\xi\in \mathbb{G}^{\ddagger}$.
Now we need to show that the function defined by \eqref{repr-space}
belongs to $C^{2}(\mathbb{G}^{\ddagger})\cap C^{1}(\overline{\mathbb{G}^{\ddagger}})$.
Since $f\in\digamma_{\alpha}(\mathbb{G}^{\ddagger}),\; {\rm supp}\,f\subset\mathbb{G}^{\ddagger},$ the volume potential (the first term of the right hand side in \eqref{repr-space}) belongs to $C^{2}(\overline{\mathbb{G}^{\ddagger}})$ by
Folland's theorem (see \cite[Theorem 6.1]{Fol75}, see also \cite{FS}).
H{\"o}rmander's hypoellipticity theorem (see \cite{H}) guarantees that every harmonic function is $C^{\infty}$, hence the Dirichlet double layer potential (the second term of the right hand side in \eqref{repr-space}) is in $C^{2}(\mathbb{G}^{\ddagger})$. On the other hand, since  $\phi\in C^{\infty}(\partial\mathbb{G}^{\ddagger}),\; {\rm supp}\,\phi\subset\{x_{1}=0,\ldots,x_{l}=0\}$ and the boundary hyperplanes $\{x_{1}=0\},\ldots,\{x_{l}=0\}$ have no characteristic points (see \cite[Section 8]{GV2000} for more discussions on the non-characteristic hyperplanes in $\mathbb{G}$) the Dirichlet double layer potential is continuous on the boundary by the Kohn-Nirenberg theorem (see \cite[Theorem 3.12]{CGH08}, which is a consequence of \cite[Theorem 4]{KN65}, see also \cite{De1}-\cite{De2}).
\end{proof}

\begin{rem}\label{greena}
 One may consider $\mathbb{G}^{\ddagger}_{a},\,a=(a_{1},\ldots,a_{l})\in\mathbb{R}^{l},$ $l$-wedge like space $\{\xi=(x_{1},\ldots,x_{m},t_{1},\ldots,t_{n})|\;x_{1}>a_{1},\ldots,x_{l}>a_{l}\}$, but in this $l$-wedge like space the Green function $G_{\mathbb{G}^{\ddagger}_{a}}$ has the same formula as the formula \eqref{greenfunctiongen} in which the symmetry points are chosen, in this case, with respect to the hyperplanes $\{x_{1}=a_{1}\},\ldots,\{x_{l}=a_{l}\}$. Of course, an analogue of Theorem \ref{representationgen} will be obtained by the same argument.
\end{rem}

Let us demonstrate some simple cases of Theorem \ref{representationgen} and Proposition \ref{greengen} with different (simpler) notations.
First, as above we construct a Green function for the Dirichlet sub-Laplacian in a half-space on $\mathbb{G}$. Let $\mathbb{G}^{+}$ be the half-space 
$$\mathbb{G}^{+}=\{\xi=(x_{1},...,x_{m},t_{1},...,t_{n})|\;x_{1}>0\}.$$
 Let the point $\zeta=(y,\tau)=(y_{1},y_{2},...,y_{m},\tau_{1},...,\tau_{n})$ lie in this half-space, $y_{1}>0$.
The point $$\zeta^{*}=(y^{*},\tau):=(-y_{1},y_{2},...,y_{m},\tau_{1},...,\tau_{n})$$
is said to be symmetric for the point $\zeta$ with respect to the hyperplane $x_{1}=0.$
We have the following direct consequence of Proposition \ref{greengen}.
\begin{cor}\label{green+}
The function
\begin{equation}\label{greenfunction+}
G_{\mathbb{G}^{+}}(\xi,\zeta)=\Gamma(\zeta^{-1}\circ\xi)-\Gamma((\zeta^{*})^{-1}\circ\xi)
\end{equation}
is the Green function for the Dirichlet sub-Laplacian in $\mathbb{G}^{+}$.
\end{cor}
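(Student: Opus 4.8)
The plan is to obtain this statement as the case $l=1$ of Proposition \ref{greengen}. First I would observe that the half-space $\mathbb{G}^{+}=\{x_{1}>0\}$ is exactly the $l$-wedge like space $\mathbb{G}^{\ddagger}$ with $l=1$, and that for $l=1$ there is a single reflected point, namely $\zeta_{x_{1}}=\zeta^{*}$. Consequently the sum $\sum_{j=1}^{l}(-1)^{j}\Gamma((\zeta_{(j,l)})^{-1}\circ\xi)$ appearing in \eqref{greenfunctiongen} collapses to the single term $(-1)^{1}\Gamma((\zeta^{*})^{-1}\circ\xi)=-\Gamma((\zeta^{*})^{-1}\circ\xi)$, so that \eqref{greenfunctiongen} reduces precisely to \eqref{greenfunction+}. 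With this identification the corollary follows at once from Proposition \ref{greengen}.

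Should one prefer a self-contained argument, I would verify directly the two defining properties of the Green function from \eqref{greenfunction}--\eqref{sub-harmonic}. For the harmonicity of the correction term I would use that $y_{1}>0$ forces $\zeta^{*}=(-y_{1},y_{2},\ldots,y_{m},\tau)\notin\mathbb{G}^{+}$; hence the pole of $\Gamma((\zeta^{*})^{-1}\circ\xi)$ lies outside $\mathbb{G}^{+}$, and by \eqref{fundsoldelta} we have $\mathcal{L}_{\xi}\Gamma((\zeta^{*})^{-1}\circ\xi)=0$ throughout $\mathbb{G}^{+}$, so $h_{\zeta}(\xi):=\Gamma((\zeta^{*})^{-1}\circ\xi)$ satisfies \eqref{sub-harmonic}. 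Since $\Gamma(\zeta^{-1}\circ\xi)$ carries the correct singularity $-\delta_{\zeta}$ inside $\mathbb{G}^{+}$, the difference $G_{\mathbb{G}^{+}}$ then has the prescribed pole and is $\mathcal{L}$-harmonic away from it.

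The main obstacle is the boundary condition \eqref{boundary}: one must show $G_{\mathbb{G}^{+}}(\xi,\zeta)=0$ for $\xi\in\partial\mathbb{G}^{+}=\{x_{1}=0\}$, i.e. that $\Gamma(\zeta^{-1}\circ\xi)=\Gamma((\zeta^{*})^{-1}\circ\xi)$ on that hyperplane. Recast through the gauge distance \eqref{distance}, this is the reflection symmetry $d(\xi,\zeta)=d(\xi,\zeta^{*})$ for every $\xi$ with $x_{1}=0$, and it is the delicate point, since the reflection $y_{1}\mapsto-y_{1}$ must interact correctly with the non-abelian group law entering \eqref{fundsolution} through the $t$-variables. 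I would check it by inserting the explicit product into \eqref{fundsolution} and comparing the $|x|$- and $|t|$-contributions attached to $\zeta$ and to $\zeta^{*}$ on $\{x_{1}=0\}$. Finally, the decay of $\Gamma$ guarantees $G_{\mathbb{G}^{+}}\to 0$ at infinity, which together with the hyperplane computation establishes \eqref{boundary}.
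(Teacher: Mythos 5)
Your proposal is correct and matches the paper exactly: the paper states this corollary as a direct consequence of Proposition \ref{greengen} with $l=1$, which is precisely your first paragraph, and your optional self-contained verification simply reproduces the proof of that proposition (harmonicity of the reflected term because its pole $\zeta^{*}$ lies outside $\mathbb{G}^{+}$, the boundary identity via the reflection symmetry of the gauge distance \eqref{distance} on $\{x_{1}=0\}$, and decay at infinity).
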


Let $f\in\digamma_{\alpha}(\mathbb{G}^{+}),\; {\rm supp}\,f\subset\mathbb{G}^{+},$
and $\phi\in C^{\infty}(\partial\mathbb{G}^{+}),\; {\rm supp}\,\phi\subset\{x_{1}=0\}.$
Consider the Dirichlet problem for the sub-Laplacian
\begin{equation}\label{LDhalf-space}
\bigg\{\begin{matrix}
\mathcal{L}u=f\quad {\rm in}\; \mathbb{G}^{+}, \\
u=\phi\quad {\rm on}\; \partial \mathbb{G}^{+}.
\end{matrix}
\end{equation}

In this case Theorem \ref{representationgen} can be restated in the following form.
\begin{cor}\label{representation}
The boundary value problem \eqref{LDhalf-space} has a unique solution $u\in C^{2}(\mathbb{G}^{+})\cap C^{1}(\overline{\mathbb{G}^{+}})$ and
it can be represented by the formula
\begin{equation}\label{reprhalf-space}
u(\xi)=\int_{\mathbb{G}^{+}}G_{\mathbb{G}^{+}}(\xi,\zeta)f(\zeta)d\nu(\zeta)
-\int_{\partial\mathbb{G}^{+}}\phi(\zeta)\langle \mathcal{\widetilde{\nabla}} G_{\mathbb{G}^{+}}(\xi,\zeta),\,d\nu(\zeta)\rangle,\quad \xi\in\mathbb{G}^{+},
\end{equation}
where
$$G_{\mathbb{G}^{+}}(\xi,\zeta)=\Gamma(\zeta^{-1}\circ\xi)-\Gamma((\zeta^{*})^{-1}\circ\xi).$$
\end{cor}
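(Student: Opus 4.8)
The plan is to view Corollary \ref{representation} as nothing more than the case $l=1$ of Theorem \ref{representationgen}, so that the entire argument reduces to matching notation and checking hypotheses. First I would observe that the half-space $\mathbb{G}^{+}=\{\xi\mid x_{1}>0\}$ coincides with the $l$-wedge like space $\mathbb{G}^{\ddagger}$ for $l=1$, and that in this case there is a single reflected point, namely $\zeta^{*}=\zeta_{x_{1}}$. Consequently the summation in the general Green function \eqref{greenfunctiongen} collapses to the single term
\[
\sum_{j=1}^{1}(-1)^{j}\Gamma((\zeta_{(j,1)})^{-1}\circ\xi)=-\Gamma((\zeta^{*})^{-1}\circ\xi),
\]
so that $G_{\mathbb{G}^{\ddagger}}$ becomes exactly the function $G_{\mathbb{G}^{+}}$ of Corollary \ref{green+} and the representation \eqref{repr-space} specializes to \eqref{reprhalf-space}. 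This identification of the Green functions is precisely the content of Corollary \ref{green+}.

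Next I would verify that the hypotheses of Theorem \ref{representationgen} are met in this setting. The assumptions $f\in\digamma_{\alpha}(\mathbb{G}^{+})$ with ${\rm supp}\,f\subset\mathbb{G}^{+}$, together with $\phi\in C^{\infty}(\partial\mathbb{G}^{+})$ and ${\rm supp}\,\phi\subset\{x_{1}=0\}$, are exactly the $l=1$ instances of the assumptions imposed there. Applying the theorem then yields at once the existence of a unique solution $u\in C^{2}(\mathbb{G}^{+})\cap C^{1}(\overline{\mathbb{G}^{+}})$ together with the representation formula \eqref{reprhalf-space}.

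Since the substantive analytic content has already been carried out in the proof of Theorem \ref{representationgen} — namely Green's second formula in the unbounded setting via quasi-balls of radius $R\to\infty$, the $C^{2}$-regularity of the volume potential through Folland's theorem, and the boundary continuity of the double layer potential through the Kohn-Nirenberg theorem — there is essentially no obstacle to overcome here. The only point worth isolating is the single genuine geometric hypothesis that guarantees continuity of the double layer potential up to $\partial\mathbb{G}^{+}$: the hyperplane $\{x_{1}=0\}$ carries no characteristic points of the sub-Laplacian $\mathcal{L}$. For the lone hyperplane defining the half-space this is the most transparent instance of the non-characteristic property already invoked in the general case, and so the corollary follows immediately.
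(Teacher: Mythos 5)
Your proposal is correct and follows exactly the route the paper intends: the paper presents Corollary \ref{representation} as a mere restatement of Theorem \ref{representationgen} in the case $l=1$, where the sum in \eqref{greenfunctiongen} collapses to the single reflected term $-\Gamma((\zeta^{*})^{-1}\circ\xi)$ of Corollary \ref{green+}. Your additional remarks on verifying the hypotheses and the non-characteristic nature of $\{x_{1}=0\}$ are consistent with what the proof of the general theorem already establishes.
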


Now we construct a Green function for the Dirichlet sub-Laplacian in a quadrant-space on $\mathbb{G}$.
Let $\mathbb{G}^{\oplus}$ be the quadrant-space 
$$
\mathbb{G}^{\oplus}=\{\xi=(x_{1},x_{2},...,x_{m},t_{1},...,t_{n})|\;x_{1}>0,\,x_{2}>0\}.
$$
Let the point $\zeta=(y,\tau)=(y_{1},y_{2},...,y_{m},\tau_{1},...,\tau_{n})$ lie in this quadrant-space, $y_{1}>0,\,y_{2}>0$. Denote by
$$\zeta^{*}=(y^{*},\tau):=(-y_{1},y_{2},...,y_{m},\tau_{1},...,\tau_{n})$$ and $$\overline{\zeta}=(\overline{y},\tau):=(y_{1},-y_{2},...,y_{m},\tau_{1},...,\tau_{n})$$
the symmetric points for $\zeta$ with respect to the hyperplanes $x_{1}=0$ and $x_{2}=0$, respectively. The point $$\overline{\zeta}^{*}=(\overline{y}^{*},\tau)=(-y_{1},-y_{2},...,y_{m},\tau_{1},...,\tau_{n})$$ is the symmetric point for $\zeta^{*}$ with respect to the hyperplane $x_{2}=0$ and the symmetric point for $\overline{\zeta}$ with respect to the hyperplane $x_{1}=0$.

We have the following another direct consequence of Proposition \ref{greengen}.
\begin{cor}\label{greeno+}
The function
\begin{equation}\label{greenfunctiono+}
G_{\mathbb{G}^{\oplus}}(\xi,\zeta)=\Gamma(\zeta^{-1}\circ\xi)+\Gamma((\overline{\zeta}^{*})^{-1}\circ\xi)-
\Gamma((\zeta^{*})^{-1}\circ\xi)-\Gamma((\overline{\zeta})^{-1}\circ\xi)
\end{equation}
is the Green function for the Dirichlet sub-Laplacian in $\mathbb{G}^{\oplus}$.
\end{cor}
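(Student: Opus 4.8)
The plan is to obtain Corollary \ref{greeno+} as the case $l=2$ of Proposition \ref{greengen}, translating the compact symmetry notation $\zeta_{(j,l)}$ into the explicit reflected poles $\zeta^{*}$, $\overline{\zeta}$, $\overline{\zeta}^{*}$ introduced above. First I would observe that $\mathbb{G}^{\oplus}$ is precisely the wedge $\mathbb{G}^{\ddagger}$ with $l=2$, so that the two bounding hyperplanes are $\{x_{1}=0\}$ and $\{x_{2}=0\}$. Specializing \eqref{greenfunctiongen} to $l=2$ then produces two groups of mirror terms: the $j=1$ sum runs over the single reflections $\zeta_{x_{1}}=\zeta^{*}$ and $\zeta_{x_{2}}=\overline{\zeta}$ and enters with sign $(-1)^{1}=-1$, while the $j=2$ sum consists of the single double reflection $\zeta_{x_{1}x_{2}}=\overline{\zeta}^{*}$ and enters with sign $(-1)^{2}=+1$. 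Collecting these reproduces
\[
\Gamma(\zeta^{-1}\circ\xi)-\Gamma((\zeta^{*})^{-1}\circ\xi)-\Gamma((\overline{\zeta})^{-1}\circ\xi)+\Gamma((\overline{\zeta}^{*})^{-1}\circ\xi),
\]
which is exactly \eqref{greenfunctiono+}.

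To keep the argument self-contained I would then verify the two defining properties directly, as in the proof of Proposition \ref{greengen}. For harmonicity, each of the three reflected poles $\zeta^{*},\overline{\zeta},\overline{\zeta}^{*}$ has at least one negative coordinate among $x_{1},x_{2}$ and hence lies outside $\mathbb{G}^{\oplus}$; by \eqref{fundsoldelta} the corresponding shifted fundamental solutions are annihilated by $\mathcal{L}$ on $\mathbb{G}^{\oplus}$, so $G_{\mathbb{G}^{\oplus}}(\cdot,\zeta)-\Gamma(\zeta^{-1}\circ\,\cdot\,)$ satisfies \eqref{sub-harmonic}, while the leading term carries the correct pole at $\zeta$.

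For the boundary condition I would treat the two faces separately using the distance-invariance recorded in the proof of Proposition \ref{greengen}, namely that for $\xi$ on $\{x_{k}=0\}$ the $d$-distance, and hence $\Gamma$, is unchanged when a pole is reflected in that hyperplane. On $\{x_{1}=0\}$ this pairs $\zeta$ with $\zeta^{*}$ and $\overline{\zeta}$ with $\overline{\zeta}^{*}$, so the terms cancel in the combinations $\Gamma(\zeta^{-1}\circ\xi)-\Gamma((\zeta^{*})^{-1}\circ\xi)=0$ and $\Gamma((\overline{\zeta}^{*})^{-1}\circ\xi)-\Gamma((\overline{\zeta})^{-1}\circ\xi)=0$. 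On $\{x_{2}=0\}$ the reflection instead pairs $\zeta$ with $\overline{\zeta}$ and $\zeta^{*}$ with $\overline{\zeta}^{*}$, and the same cancellation occurs; vanishing at infinity is automatic from the decay of $\Gamma$. The one point needing care — which I regard as the real content, since the statement is otherwise a direct specialization — is the bookkeeping that makes both faces vanish simultaneously: the fourth, doubly-reflected pole $\overline{\zeta}^{*}$ is forced precisely by the requirement that the terms pair up correctly on $\{x_{1}=0\}$ and on $\{x_{2}=0\}$ at once, and its $+$ sign is exactly what the alternating signs of \eqref{greenfunctiongen} deliver. Once this matching is confirmed, \eqref{boundary} holds and $G_{\mathbb{G}^{\oplus}}$ is the Green function.
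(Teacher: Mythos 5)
Your proposal is correct and matches the paper's treatment: the paper states Corollary \ref{greeno+} as a direct consequence of Proposition \ref{greengen} with $l=2$, and your sign bookkeeping ($j=1$ contributing $-\Gamma((\zeta^{*})^{-1}\circ\xi)-\Gamma((\overline{\zeta})^{-1}\circ\xi)$, $j=2$ contributing $+\Gamma((\overline{\zeta}^{*})^{-1}\circ\xi)$) reproduces \eqref{greenfunctiono+} exactly. Your additional direct verification of harmonicity and of the cancellation on each face simply re-runs the argument of Proposition \ref{greengen} in this special case, so the approach is essentially the same as the paper's.
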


\subsection{Green functions and representations of solutions in $l$-strip spaces.}
Let $\mathbb{G}^{\models}$ be the $l$-strip like space 
$$
\mathbb{G}^{\models}=\{\xi=(x_{1},\ldots,x_{m},t_{1},\ldots,t_{n})|\;a>x_{l}>0\},$$ for some
$1\leq l\leq m$. Let the point $\zeta=(y,\tau)=(y_{1},\ldots,y_{m},\tau_{1},\ldots,\tau_{n})$ lie in this $l$-strip space, $a>y_{l}>0$.
We will use the notations
$$\zeta_{+,j}:=(y_{1},\ldots,y_{l}-2aj,\ldots,y_{m},\tau_{1},...,\tau_{n}),$$
and
$$\zeta_{-,j}:=(y_{1},\ldots,-y_{l}+2aj,\ldots,y_{m},\tau_{1},...,\tau_{n}),$$
for all $j=0,1,2,\ldots.$
Following familiar pattern as above we obtain
\begin{prop}\label{Gstrip}
The function
\begin{equation}\label{Gfunctionstr}
G_{\mathbb{G}^{\models}}(\xi,\zeta)=\sum_{j=-\infty}^{\infty}\left(\Gamma(\zeta^{-1}_{+,j}\circ\xi)-
\Gamma(\zeta^{-1}_{-,j}\circ\xi)\right)
\end{equation}
is the Green function for the Dirichlet sub-Laplacian in $\mathbb{G}^{\models}$.
\end{prop}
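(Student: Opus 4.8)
The plan is to follow the same scheme as in the proof of Proposition~\ref{greengen}: isolate the single singular term, check that the remaining terms are $\mathcal{L}$-harmonic in the strip, and verify the vanishing of $G_{\mathbb{G}^{\models}}$ on the two boundary hyperplanes and at infinity. The one genuinely new ingredient, absent in the finite-sum wedge case, is that \eqref{Gfunctionstr} is an infinite series, so its convergence and the legitimacy of term-by-term operations must be settled at the outset. I would begin by recording the two reflection relations that organise the cancellations: writing the $l$-th coordinate of $\zeta_{+,j}$ as $y_l-2aj$ and that of $\zeta_{-,j}$ as $-y_l+2aj$, one sees directly that $\zeta_{-,j}$ is the mirror image of $\zeta_{+,j}$ across the hyperplane $\{x_l=0\}$, while the reflection $c\mapsto 2a-c$ across $\{x_l=a\}$ sends $y_l-2aj$ to $-y_l+2a(j+1)$, so that $\zeta_{-,j+1}$ is the mirror image of $\zeta_{+,j}$ across $\{x_l=a\}$. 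Moreover, since $0<y_l<a$, the only image point lying in $\mathbb{G}^{\models}$ is $\zeta_{+,0}=\zeta$; every other $\zeta_{\pm,j}$ has $l$-th coordinate outside $(0,a)$.

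The convergence of \eqref{Gfunctionstr} is the step I expect to be the main obstacle. Using $\Gamma(\zeta^{-1}\circ\xi)=d(\xi,\zeta)^{2-Q}$ from \eqref{distance} together with the elementary bound $d(\xi,\zeta)\geq|x-y|$, the images recede at a linear rate $d(\xi,\zeta_{\pm,j})\gtrsim 2a|j|$ as $|j|\to\infty$ for $\xi$ ranging over a fixed compact subset of $\mathbb{G}^{\models}$, so each summand is $O(|j|^{2-Q})$. Since $Q\geq 4$, the comparison series $\sum_j|j|^{2-Q}$ converges, whence \eqref{Gfunctionstr} converges absolutely and uniformly on compact subsets avoiding $\zeta$; applying the same estimate to the horizontal derivatives of $\Gamma$, which are homogeneous of degree $1-Q$ and hence decay one power faster, justifies differentiating the series term by term there.

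With convergence in hand the remaining verifications are routine and follow the pattern of Proposition~\ref{greengen}. Each summand other than $\Gamma(\zeta^{-1}\circ\xi)$ has its pole $\zeta_{\pm,j}$ outside the strip, so by \eqref{fundsoldelta} it is $\mathcal{L}$-harmonic in $\mathbb{G}^{\models}$; term-by-term application of $\mathcal{L}$ then gives $\mathcal{L}_\xi G_{\mathbb{G}^{\models}}(\cdot,\zeta)=-\delta_\zeta$ in $\mathbb{G}^{\models}$, which exhibits \eqref{Gfunctionstr} in the form \eqref{greenfunction} with a harmonic corrector. For the boundary values I would invoke the distance property used in Proposition~\ref{greengen} (and its shifted version from Remark~\ref{greena}): on $\{x_l=0\}$ it gives $\Gamma(\zeta_{+,j}^{-1}\circ\xi)=\Gamma(\zeta_{-,j}^{-1}\circ\xi)$, so each paired difference in \eqref{Gfunctionstr} vanishes termwise; on $\{x_l=a\}$ it gives $\Gamma(\zeta_{+,j}^{-1}\circ\xi)=\Gamma(\zeta_{-,j+1}^{-1}\circ\xi)$, and a shift of the summation index, permitted by the absolute convergence established above, makes the whole series telescope to zero. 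Finally $G_{\mathbb{G}^{\models}}\to 0$ at infinity by the decay of $\Gamma$, so \eqref{Gfunctionstr} is the Dirichlet Green function for $\mathbb{G}^{\models}$.
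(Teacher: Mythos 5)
Your write-up mirrors the paper's own proof of Proposition \ref{Gstrip} step by step: isolate the $j=0$ singular term, observe that every other pole lies outside the strip so the corresponding terms are $\mathcal L$-harmonic there, cancel the paired terms on $\{x_l=0\}$, and shift the index to cancel on $\{x_l=a\}$. In one respect you go beyond the paper: the published proof nowhere discusses convergence of the bilateral series \eqref{Gfunctionstr}, whereas your bound $d(\xi,\zeta_{\pm,j})\gtrsim |j|$ combined with $Q\ge 4$ does give absolute, locally uniform convergence and legitimises both termwise differentiation and the reindexing on $\{x_l=a\}$. That addition is correct and fills a real omission.

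There is, however, a genuine gap, and it is one you inherit from Proposition \ref{greengen} rather than introduce yourself: the claimed identity $\Gamma(\zeta_{+,j}^{-1}\circ\xi)=\Gamma(\zeta_{-,j}^{-1}\circ\xi)$ on $\{x_l=0\}$ (equivalently, the ``equidistance'' of a boundary point from $\zeta$ and its Euclidean mirror image) fails on a non-abelian prototype $H$-type group. Indeed $\zeta^{-1}\circ\xi=\bigl(x-y,\;t_k-\tau_k-\tfrac12\langle A^{(k)}y,x\rangle\bigr)$, so $\Gamma(\zeta^{-1}\circ\xi)$ depends on $y_l$ not only through $(x_l-y_l)^2$ but also through the bilinear terms $\langle A^{(k)}y,x\rangle$, which are not invariant under $y_l\mapsto -y_l$ even after restricting to $x_l=0$; the paper's displayed computation writes the central entry as $16|t-\tau|^2$, silently dropping exactly these terms. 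Concretely, in $\mathbb H^{1}$ with $A^{(1)}=\bigl(\begin{smallmatrix}0&1\\-1&0\end{smallmatrix}\bigr)$, take $\zeta=(1,0,0)$, its mirror $\zeta^{*}=(-1,0,0)$, and $\xi=(0,1,1)\in\{x_1=0\}$: then $\zeta^{-1}\circ\xi=(-1,1,\tfrac32)$ while $(\zeta^{*})^{-1}\circ\xi=(1,1,\tfrac12)$, so $\Gamma(\zeta^{-1}\circ\xi)=c\,40^{-1/2}\ne c\,8^{-1/2}=\Gamma((\zeta^{*})^{-1}\circ\xi)$, and the reflected difference does not vanish on the hyperplane. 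The same defect propagates to every paired cancellation in \eqref{Gfunctionstr}, so neither your argument nor the paper's establishes the boundary condition \eqref{boundary} outside the abelian case; a correct proof would need a reflection adapted to the twist in the group law (or a restriction of the class of groups/hyperplanes), not the coordinate reflection $y_l\mapsto -y_l+2aj$ as it stands.
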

\begin{proof}[Proof of Proposition \ref{Gstrip}]
It is evident that the first additive component in the $j=0$ term
of \eqref{Gfunctionstr}, i.e. the term $\Gamma(\zeta^{-1}_{+,0}\circ\xi)$ represents the fundamental solution and all the other terms are subharmonic functions in $\mathbb{G}^{\models}$.
Let us check that traces of \eqref{Gfunctionstr} vanish on hyperplanes $x_{l}=0$ and  $x_{l}=a$.
If $x_{l}=0$, then \eqref{Gfunctionstr} gives
\begin{multline}
G_{\mathbb{G}^{\models}}(\xi,\zeta)|_{x_{l}=0}=
\\c\sum_{j=-\infty}^{\infty}
\Big(\left(((x_{1}-y_{1})^{2}+\ldots+(-y_{l}+2aj)^{2}+\ldots+(x_{m}-y_{m})^{2})^{2}+16|t-\tau|^{2}\right)^{(2-Q)/4}
\\-\left(((x_{1}-y_{1})^{2}+\ldots+(y_{l}-2aj)^{2}+\ldots+(x_{m}-y_{m})^{2})^{2}+16|t-\tau|^{2}\right)^{(2-Q)/4}\Big)=0.
\end{multline}
If $x_{l}=a$, then \eqref{Gfunctionstr} gives
\begin{multline}
G_{\mathbb{G}^{\models}}(\xi,\zeta)|_{x_{l}=a}=
\\c\sum_{j=-\infty}^{\infty}
\Big(\left(((x_{1}-y_{1})^{2}+\ldots+(a-y_{l}+2aj)^{2}+\ldots+(x_{m}-y_{m})^{2})^{2}+16|t-\tau|^{2}\right)^{(2-Q)/4}
\\-\left(((x_{1}-y_{1})^{2}+\ldots+(a+y_{l}-2aj)^{2}+\ldots+(x_{m}-y_{m})^{2})^{2}+16|t-\tau|^{2}\right)^{(2-Q)/4}\Big)
=
\\c\sum_{j=0}^{\infty}
\left(((x_{1}-y_{1})^{2}+\ldots+(a-y_{l}+2aj)^{2}+\ldots+(x_{m}-y_{m})^{2})^{2}+16|t-\tau|^{2}\right)^{(2-Q)/4}
\\-c\sum_{j=1}^{\infty}\left(((x_{1}-y_{1})^{2}+\ldots+(a+y_{l}-2aj)^{2}+\ldots+(x_{m}-y_{m})^{2})^{2}+16|t-\tau|^{2}\right)^{(2-Q)/4}
\\+c\sum_{j=-1}^{-\infty}
\left(((x_{1}-y_{1})^{2}+\ldots+(a-y_{l}+2aj)^{2}+\ldots+(x_{m}-y_{m})^{2})^{2}+16|t-\tau|^{2}\right)^{(2-Q)/4}
\\-c\sum_{j=0}^{-\infty}\left(((x_{1}-y_{1})^{2}+\ldots+(a+y_{l}-2aj)^{2}+\ldots+(x_{m}-y_{m})^{2})^{2}+16|t-\tau|^{2}\right)^{(2-Q)/4}
=0.
\end{multline}
Here the first term ($j=0$ term) of the first sum is canceled with the first term ($j=1$ term) of the second sum and the second terms of the first sum is canceled with the second term of the second sum and so on, that is, the first two sums give zero. Similarly, the first term of the third sum is canceled with the first term of the last sum and the second term of the the third sum is canceled with the second term of the last sum and on on, that is, the last two sums also give zero. As a result, the trace vanishes at $x_{l}=a$.
\end{proof}

Let $f\in\digamma_{\alpha}(\mathbb{G}^{\models}),\;0<\alpha<1,\; {\rm supp}\,f\subset\mathbb{G}^{\models},$
and $\phi\in C^{\infty}(\partial\mathbb{G}^{\models}),\; {\rm supp}\,\phi\subset\{x_{l}=0\}\bigcup\{x_{l}=a\}.$
Consider the Dirichlet problem for the sub-Laplacian
\begin{equation}\label{LDgen-space}
\bigg\{\begin{matrix}
\mathcal{L}u=f\quad {\rm in}\; \mathbb{G}^{\models}, \\
u=\phi\quad {\rm on}\; \partial \mathbb{G}^{\models}.
\end{matrix}
\end{equation}

\begin{thm}\label{repstrip}
Let $f\in\digamma_{\alpha}(\mathbb{G}^{\models}),\;0<\alpha<1,\; {\rm supp}\,f\subset\mathbb{G}^{\models},$
and $\phi\in C^{\infty}(\partial\mathbb{G}^{\models})$. Then the boundary value problem \eqref{LDgen-space} has a unique solution $u\in C^{2}(\mathbb{G}^{\models})\cap C^{1}(\overline{\mathbb{G}^{\models}})$ and
it can be represented by the formula
\begin{equation}\label{repr-space}
u(\xi)=\int_{\mathbb{G}^{\models}}G_{\mathbb{G}^{\models}}(\xi,\zeta)f(\zeta)d\nu(\zeta)
-\int_{\partial\mathbb{G}^{\models}}\phi(\zeta)\langle \mathcal{\widetilde{\nabla}} G_{\mathbb{G}^{\models}}(\xi,\zeta),\,d\nu(\zeta)\rangle,\quad \xi\in\mathbb{G}^{\models},
\end{equation}
where $\mathcal{\widetilde{\nabla}}$ is defined by \eqref{nabla},
in particular,
$$\mathcal{\widetilde{\nabla}}G_{\mathbb{G}^{\models}}=
\sum_{k=1}^{m}\left(X_{k}G_{\mathbb{G}^{\models}}\right)X_{k}.$$
\end{thm}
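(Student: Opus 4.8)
The plan is to mirror the proof of Theorem \ref{representationgen}, the only genuine novelty being that the finite reflection sum is replaced by the bi-infinite series \eqref{Gfunctionstr}. First I would take $u\in C^{2}(\mathbb{G}^{\models})\cap C^{1}(\overline{\mathbb{G}^{\models}})$ solving \eqref{LDgen-space} and tending to zero at infinity, and apply Green's second formula \eqref{g2} with $v(\zeta)=G_{\mathbb{G}^{\models}}(\xi,\zeta)$. Since the strip is unbounded (in every direction other than $x_{l}$), I would run \eqref{g2} on the intersection of $\mathbb{G}^{\models}$ with quasi-balls of radius $R$ and let $R\to\infty$, using the decay of $u$ together with that of $\Gamma$ to discard the spherical contribution, exactly as indicated after Proposition \ref{green2}. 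By Proposition \ref{Gstrip} the function $G_{\mathbb{G}^{\models}}$ vanishes on $\partial\mathbb{G}^{\models}$, and being assembled symmetrically from $\Gamma$ it is symmetric, $G_{\mathbb{G}^{\models}}(\xi,\zeta)=G_{\mathbb{G}^{\models}}(\zeta,\xi)$, so that $\mathcal{L}_{\zeta}G_{\mathbb{G}^{\models}}(\xi,\zeta)=-\delta_{\xi}$. Substituting $\mathcal{L}u=f$ and $u|_{\partial\mathbb{G}^{\models}}=\phi$ then produces the representation \eqref{repr-space}.

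The complementary task is to verify that the right-hand side of \eqref{repr-space} lies in $C^{2}(\mathbb{G}^{\models})\cap C^{1}(\overline{\mathbb{G}^{\models}})$ and realizes the prescribed data. For the volume potential I would invoke Folland's theorem (\cite[Theorem 6.1]{Fol75}, see also \cite{FS}) to obtain membership in $C^{2}(\overline{\mathbb{G}^{\models}})$ from $f\in\digamma_{\alpha}$ with compact support. For the double-layer term, H\"ormander hypoellipticity yields interior smoothness, and since both bounding hyperplanes $\{x_{l}=0\}$ and $\{x_{l}=a\}$ are non-characteristic (cf. \cite[Section 8]{GV2000}), the Kohn--Nirenberg theorem (\cite[Theorem 3.12]{CGH08}) gives continuity up to the boundary, so that the prescribed value $\phi$ is attained; uniqueness follows from the maximum principle just as in Theorem \ref{representationgen}.

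The hard part will be controlling the infinite series, which is absent in the wedge case. I must justify that \eqref{Gfunctionstr} converges and may be manipulated term by term. The key estimate is that the $x_{l}$-component of $\zeta_{\pm,j}^{-1}\circ\xi$ grows like $2a|j|$, so that $d(\zeta_{\pm,j},\xi)\geq c\,|j|$ for large $|j|$ and hence $\Gamma(\zeta_{\pm,j}^{-1}\circ\xi)\leq C\,|j|^{2-Q}$; each half-series is therefore dominated by $\sum_{j}|j|^{2-Q}$, which converges precisely because $Q\geq 4$, as recorded in Section \ref{SEC:2}. This bound is locally uniform in $(\xi,\zeta)$ away from the diagonal and survives differentiation, so $\mathcal{L}_{\zeta}$, the quasi-ball limit $R\to\infty$, and the boundary traces all pass through the sum, with the $j=0$ term carrying the Dirac mass and all remaining terms harmonic. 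Once convergence and term-by-term differentiation are secured, applying Folland's and Kohn--Nirenberg's theorems to the series reduces to checking that the tail converges in the relevant $C^{2}$ and boundary-continuity norms on compact subsets, which the same $|j|^{2-Q}$ majorant supplies.
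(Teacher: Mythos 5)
Your proposal follows exactly the paper's intended route: the paper's own proof of Theorem \ref{repstrip} consists of the single sentence that it is ``similar to proof of Theorem \ref{representationgen}'', which is precisely the argument you carry out. Your extra verification that the bi-infinite series \eqref{Gfunctionstr} converges and can be differentiated termwise (via the majorant $\sum_{j}|j|^{2-Q}$ with $Q\geq 4$) is a correct detail that the paper leaves entirely implicit.
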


\begin{proof}[Proof of Theorem \ref{repstrip}]
Similar to proof of Theorem \ref{representationgen}.
\end{proof}

\end{document}